\documentclass[12pt]{amsart}
\usepackage[dvips]{graphicx}
\usepackage{epsfig}
\usepackage{amssymb}
\usepackage[usenames, dvipsnames]{color}
\usepackage{hyperref}
\usepackage{verbatim}
\usepackage[normalem]{ulem}

\theoremstyle{plain}
\newtheorem{theorem}{Theorem}[section]
\newtheorem{lemma}[theorem]{Lemma}

\newtheorem{proposition}[theorem]{Proposition}

\theoremstyle{definition}

\theoremstyle{remark}

\def\La{\Lambda}

\def\th{\theta}

\def\e{\epsilon}

\def\p{\partial}

\def\al{\alpha}

\def\O{\Omega}

\def\r{\rho}

\def\be{\begin{equation}}
\def\ee{\end{equation}}
\def\bes{\begin{equation*}}
\def\ees{\end{equation*}}
\def\bali{\begin{aligned}}
\def\eali{\end{aligned}}
\def\al{\begin{aligned}}
\def\eal{\end{aligned}}

\def\lab{\label}

\def\2O{\underline{\O}}

\numberwithin{equation}{section}

\makeatletter
\def\dashint{\operatorname%
{\,\,\text{\bf--}\kern-.98em\DOTSI\intop\ilimits@\!\!}}
\makeatother

\begin{document}


\title[partial type I]{ On partial type I solutions to the Axially symmetric Navier-Stokes equations
}

\thanks{}

\author[Q. S. Zhang]{Qi S. Zhang}

\address[]{Department of mathematics, University of California, Riverside, CA 92521,
USA}

\email{qizhang@math.ucr.edu}

\subjclass[2020]{35Q30, 76N10}

\keywords{  Navier-Stokes equations,  axial symmetry and one sided regularity condition. }

\begin{abstract}

 Let $v= v_{r}e_{r} + v_{\th}e_{\th} + v_{3}e_{3}$ be a Leray-Hopf solution to the axially symmetric Navier-Stokes equations (ASNS). We call it a partial type I solution if $v_r(x, t) \ge -C/\sqrt{T-t}$ for some constant $C>0$ and  $(x, t) \in \mathbf{R}^3 \times [0, T)$. In this paper, it is proven that such solution does not blow up at time $T$ under the extra mild assumption that $|v_\theta(x, 0)| |x'|$ is bounded. This extends a well known result by two groups of people who proved the no blowup conclusion under the full type I condition:  $|v(x, t)| \le C/\sqrt{T-t}$. The result also confirms the physical intuition that potential blow ups for ASNS are caused by super-critical inward radial velocity.
\end{abstract}
\maketitle


\section{Statement of result}

 In this paper, we prove a new regularity criteria, beyond type I condition, for Leray-Hopf solutions to  the axially symmetric Navier-Stokes equations in $\mathbf{R}^3 \times (0, \infty)$, abbreviated as ASNS henceforth:
\be
\begin{aligned}
\label{eqasns}
\begin{cases}
   \big (\Delta-\frac{1}{r^2} \big )
v_r-(v_r \p_r + v_3 \p_{x_3})v_r+\frac{(v_{\theta})^2}{r}-\partial_r
P-\p_t  v_r=0,  \\
   \big   (\Delta-\frac{1}{r^2}  \big
)v_{\theta}-(v_r \p_r + v_3 \p_{x_3} )v_{\theta}-\frac{v_{\theta} v_r}{r}-
\partial_t v_{\theta}=0,\\
 \Delta v_3-(v_r \p_r + v_3 \p_{x_3})v_3-\p_{x_3} P-\p_t v_3=0,\\
 \frac{1}{r} \p_r (rv_r) +\p_{x_3}
v_3=0,\\
v(\cdot, 0)=v_0(\cdot).
\end{cases}
\end{aligned}
\ee
Here, $P$ is the pressure,  $v = v_{r}e_{r} + v_{\th}e_{\th} + v_{3}e_{3}$ is the velocity in the cylindrical system with the standard basis $\{e_{r}, e_{\th}, e_{3}\}$, where for any $ x=(x_1,x_2,x_3)\in \mathbf{R}^3 $, $ r=\sqrt{x_1^2+x_2^2} \equiv |x'|$ and
\be\label{ert3}
e_r=(x_1/r, x_2/r, 0),\quad e_\th=(-x_2/r, x_1/r, 0),\quad e_3=(0,0,1).\ee
The components $v_{r}$, $v_{\th}$ and $v_{3}$ are independent of the azimuthal angle $\th$. Although ASNS is a special case of the full 3D Navier-Stokes equations,
\be
\label{nse}
\Delta v -  (v\cdot \nabla) v - \nabla P -\partial_t v =0, \quad \text{div} \, v=0, \quad \mathbf{R}^3 \times (0, \infty), \, v(x, 0)=v_0,
\ee
the regularity problem of the former is still open in general. It is also referred to as the $2 \frac{1}2$ dimensional regularity problem.
Let us recall that Leray \cite{Le2}, proved in the 1930s that the Cauchy
  problem of \eqref{nse} has a solution in the energy space $(v, \nabla v) \in   (L^\infty_t L^2_x,
  L^2_{x t})$. However, in general it is not known if such solutions stay bounded or
  regular for all $t>0$ given regular initial values.
Over the years, many  researchers have spent effort on the regularity problem of ASNS,  a small list of which include the relevant papers \cite{La, UY, CSTY1, CSTY2, KNSS, CFZ, LZ17, Weid} and the references therein. In particular, in the swirl free case ($v_\theta=0$), the regularity problem was solved in \cite{La, UY}.

 From the 1960s, it is already realized by Ladyzhenskaya and Yudovic that one key quantity for the ASNS is $\Gamma=r v_\theta$ which is scaling invariant and which satisfies the drift diffusion equation:
\be
\lab{eqgam1}
\begin{cases}
\al
&(\p^2_r + \p^2_{x_3})  \Gamma  - \frac{1}{r} \p_r  \Gamma  -(v_r, v_3) \cdot (\p_r, \p_{x_3})  \Gamma  - \p_t \Gamma=0, \quad r \ge 0, t \in (0, T),\\
&\Gamma(0, t)=0, \quad t \ge 0;  \quad \Gamma(r, 0)= \Gamma_0(r).
\eal
\end{cases}
\ee Therefore $\Gamma$ satisfies the maximum principle, giving us the a priori bound
$|\Gamma(x, t)| \le \Vert \Gamma(\cdot, 0) \Vert_\infty$, i.e. the swirl velocity satisfies $|v_\theta(x, t) | \le \Vert \Gamma(\cdot, 0) \Vert_\infty/|x'|$. As before $x'=(x_1, x_2, 0)$ if $x=(x_1, x_2, x_3) \in \mathbf{R}^3$.
Notice that this a priori bound for $v_\theta$ is already critical instead of super critical.  One would hope to prove that $\Gamma$ has some regularity near the axis $x'=0$, so that $v_\theta(x, t)$ would become a little less singular. However, the drift terms $(v_r, v_3)$ being in  the usual energy space, are still super critical and far too singular to apply the classical De Giorgi-Nash-Moser theory. In the years 2008-2009, two groups of authors Chen, Strain,  Tsai and  Yau \cite{CSTY1, CSTY2} and respectively Koch, Nadirashvili,  Seregin and Sverak \cite{KNSS} essentially proved $\Gamma$ is H\"older continuous under certain critical assumptions on $v$, i.e. $|v(x, t)|<C/|x'|$ or $|v(x, t)| \le C/\sqrt{T-t}$. This is enough to prove regularity for Leray-Hopf solutions. Solutions satisfying these kind of scaling invariant conditions are often referred to as type I solutions. So the remaining cases to deal with are type II (non type I or supercritical) solutions. In \cite{Px} a slightly super-critical regularity condition is obtained.

Around 2015, Chen-Fang-Zhang \cite{CFZ} realized that H\"older continuity of $\Gamma$ alone is enough to prove regularity. Their idea is to work on the system of equations for the modified vorticity $\Omega =\omega_\theta/r$ (known since 1960s \cite{La, UY}) and $J=\omega_r/r$: for $b=v_r e_r + v_3 e_3$,
\begin{equation}
\label{eqjoo}
\begin{cases}
\Delta J  -(b\cdot\nabla) J +\frac{2}{r}\p_r J +
 (\omega_r \p_r + \omega_3 \p_3) \frac{v_r}{r} - \p_t J
=0,\\
\Delta \Omega -(b\cdot\nabla)\Omega+\frac{2}{r}\p_r
\Omega - \frac{2v_{\theta}}{r} J -\p_t \Omega=0.\\
\end{cases}
\end{equation}
Shortly after, it was realized in \cite{LZ17} by Lei and the author that the vortex stretching terms in this modified system is critical instead of super critical. Moreover, a $1/|\ln r|^2$ modulus continuity alone for $\Gamma$ is sufficient for the regularity of the whole solution. This was improved to $1/|\ln r|^{3/2}$ modulus continuity by Wei \cite{Weid} a little later.These results induce some expectation that the regularity problem in this case is becoming accessible one way or the other. However the drift terms $b$ are still super critical and we ran into the same old difficulty as before.

The following is the main result of the paper. Since only one sided bound in the radial velocity is needed, it provides a regularity criteria that goes beyond the type I condition. We call this condition partial type I condition.

\begin{theorem}
\lab{thmain}
Let $v=v(x, t)$ be a Leray-Hopf solution of the axially symmetric Navier-Stokes equations in $\mathbf{R^3} \times [0, \infty)$. Suppose, (a). the initial value $v_0=v(\cdot, 0)$ is in the space
$L^2(\mathbf{R^3}) \cap L^\infty(\mathbf{R^3}) \cap C^3(\mathbf{R^3})$,  (b). $r v_\theta(\cdot, 0) \in L^\infty(\mathbf{R^3})$,  (c). $v$ is partially type I, i.e.:  $v_r(x, t) \ge -c/\sqrt{T-t}$, $(x, t) \in \mathbf{R}^3 \times [0, T)$ for any given constants $c>0$ and $T>0$. Then $v$ is bounded and smooth in $\mathbf{R^3} \times [0, T]$.
\end{theorem}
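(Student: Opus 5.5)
Argue by contradiction and blow up at the first singular time. Assume $T$ is the first blowup time, so $M(t)=\sup_{s\le t}\Vert v(\cdot,s)\Vert_\infty\to\infty$ as $t\to T^-$. Pick points $(x_k,t_k)$ with $|v(x_k,t_k)|\approx M(t_k)=:M_k$, and rescale
\bes
v^{(k)}(y,s)=M_k^{-1}v\big(x_k'+M_k^{-1}y,\ t_k+M_k^{-2}s\big),\qquad s\in[-M_k^2t_k,0],
\ees
where the shift is only along the $x_3$-axis so that axial symmetry is preserved. Then $|v^{(k)}|\le 1$ and $|v^{(k)}(0,0)|\approx 1$. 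The hypotheses change as follows:
\begin{itemize}
\item The partial type I bound becomes $v^{(k)}_r\ge -c/\sqrt{M_k^2(T-t_k)-s}$.
\item By (b), $|rv^{(k)}_\theta|\le \Vert\Gamma_0\Vert_\infty$, which is scale invariant.
\item The energy is controlled by the Leray--Hopf bound.
\end{itemize}
Two cases arise. If $M_k^2(T-t_k)\to\infty$ along a subsequence, the lower bound on $v_r$ degenerates to $v^{(k)}_r\ge -o(1)$ on compact sets. If $M_k^2(T-t_k)$ stays bounded, we only get a type-I-like one-sided bound. In the second case we pass to the limit with the singular point at finite rescaled distance and use a shifted time variable. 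Standard local estimates (the $L^\infty$ bound plus Navier--Stokes smoothing) give compactness in $C^{2,1}_{loc}$. The limit is a bounded ancient axisymmetric solution with $|r\bar v_\theta|\le C$, a nonzero velocity at the origin, and a one-sided bound $\bar v_r\ge -c/\sqrt{A-s}$; in the first case $\bar v_r\ge 0$.

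The core step is to exploit the sign of $v_r$ to get H\"older continuity, or at least a log-modulus, of $\Gamma=rv_\theta$ near the axis. This plays the role of the KNSS/CSTY argument, but uses only one-sided control of the drift. The drift in \eqref{eqgam1} is $b=(v_r,v_3)$, and by incompressibility $\mathrm{div}\,b=0$ in $\mathbf{R}^3$. The key observation is that in the 5-dimensional formulation, where $\Gamma/r^2$ or $v_\theta/r$ is treated as a radial-in-$\mathbf{R}^4$ function, the term $-\frac1r\p_r$ can be rewritten. The $r$-component of the drift then enters energy and Moser iteration estimates only through $\int v_r\,\phi^2/r$-type terms whose sign is favourable when $v_r\ge -c/\sqrt{T-t}$. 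Concretely, test \eqref{eqgam1} with $\Gamma^{p-1}\phi^2$, where $\phi$ is a cutoff. The drift term becomes $\frac1p\int \Gamma^p\, b\cdot\nabla\phi^2$ after integration by parts, with the radial piece carrying weight $r$ from the measure. One needs to bound $\int \Gamma^p v_3\p_3\phi^2$. Choose cutoffs depending only on $r$ near the axis, so that only $v_r$ appears. Then the one-sided bound, together with $\p_r\phi^2\le 0$ for a cutoff decreasing in $r$, gives the term a good sign up to $c/\sqrt{T-t}$, which is scale-critical. This yields a De Giorgi oscillation decay for $\Gamma$ in cylinders shrinking toward the axis, and hence $|\Gamma(x,t)|\le C|x'|^{\alpha}$ uniformly up to $T$.

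Conclude as follows. With a H\"older (or $1/|\ln r|^{2}$) modulus for $\Gamma$, apply the Lei--Zhang criterion \cite{LZ17}, or \cite{CFZ}, via the $(J,\Omega)$ system \eqref{eqjoo}: the vortex stretching terms become subcritical, giving boundedness of $\Omega,J$, hence of $v$ up to $T$. This contradicts the assumption of blowup, and smoothness follows from (a) and standard regularity. The main obstacle is the previous step, namely that De Giorgi/Moser iteration requires controlling $v_3$ (and the positive part of $v_r$) with only energy bounds. I would first try purely radial cutoffs on long $x_3$-cylinders, using the blowup limit's boundedness to handle $v_3$ at unit scale. If that fails, I would work on the limit ancient solution and prove a Liouville theorem: $\bar v_r\ge0$ combined with $\mathrm{div}\,b=0$ and boundedness forces $\bar v_r\equiv0$, hence $\bar v_3$ depends only on $r$. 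The swirl equation then trivializes, contradicting $|\bar v(0,0)|\approx1$.
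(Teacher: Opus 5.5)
Your final step (a H\"older modulus for $\Gamma=rv_\theta$ at the axis, then \cite{CFZ}) matches the paper. The gap is the core step, the bound $|\Gamma(x,t)|\le C|x'|^{\alpha}$ uniformly up to $t=T$: it is not established, and the obstacle you flag is real. Radial cutoffs do remove $v_3$ from the drift term, and they make the $v_r$ contribution scale-critical in a Caccioppoli inequality. But they do not localize in $x_3$. Everything then lives on infinite cylinders $\{r<R\}\times\mathbf{R}$, where the measure-theoretic steps of De Giorgi (level-set density, isoperimetric lemma, oscillation decay) have no scale-invariant content. The only control on $|\{\Gamma>k\}\cap\{r<R\}|$ there is the global energy, which is supercritical. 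Localizing in $x_3$ brings back $\int(\Gamma-k)_+^2\,v_3\,\partial_{x_3}\phi^2$ with $v_3$ only in the energy class.

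The blow-up detour does not rescue this, for four reasons. First, the rescaled energy is $M_k\Vert v\Vert_2^2\to\infty$, so energy is not controlled after rescaling. Second, $|v^{(k)}|\le 1$ makes $v_3$ harmless only at scales $\lesssim 1$, whereas a Liouville argument needs scale-invariant control at all large scales. Third, in your second case the limit does not satisfy $\bar v_r\ge 0$. Fourth, the fallback claim does not follow from the properties you list: the stream function $\psi=-\frac{r^2}{1+r^2}\tanh x_3$ gives a smooth, bounded, divergence-free axisymmetric field with $v_r=\frac{r}{(1+r^2)\cosh^2 x_3}\ge 0$ and $v_r\not\equiv 0$. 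Even $\bar v_r\equiv 0$ would not contradict $|\bar v(0,0)|\approx 1$; take $\bar v=e_3$.

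The missing idea is to exploit the radial structure through the maximum principle, using a barrier that is independent of $x_3$ and nondecreasing in $r$, rather than through integral estimates. The paper takes $g(t)=K/\sqrt{T-t}\ge\Vert v_r^-(\cdot,t)\Vert_\infty$ and constructs $\Lambda(r,t)$ with $\partial_r^2\Lambda-\frac1r\partial_r\Lambda+g(t)\partial_r\Lambda-\partial_t\Lambda=0$, $\Lambda(0,t)=0$ and $\Lambda(\cdot,0)\ge|\Gamma_0|$. It then proves $\partial_r\Lambda\ge 0$; this is the technical part, because of the $1/r^2$ potential in the equation for $\partial_r\Lambda$. Applying the operator of the $\Gamma$ equation to $\pm\Gamma-\Lambda$ gives $(v_r+g)\partial_r\Lambda\ge 0$. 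The $v_3$ drift drops out identically since $\partial_{x_3}\Lambda=0$, and only the one-sided bound on $v_r$ is used, so $|\Gamma|\le\Lambda$.

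Monotonicity also makes $\Lambda$ a subsolution of $u_{\rho\rho}+g(t)u_\rho-u_t=0$ with $u(0,t)=0$ and linear initial data. Because this drift depends on $t$ only, the substitution $z=\rho+A(t)$ with $A(t)=2K(\sqrt T-\sqrt{T-t})$ turns it into the heat equation on $\{z>A(t)\}$. The bound $A(t)-A(t-s)\le 2K\sqrt{s}$ gives a uniform exterior measure condition at every scale. Boundary H\"older regularity then yields $u(\rho,t)\le C(\rho^{\alpha}+\rho)$ uniformly up to $t=T$. This is exactly where the power in $c/\sqrt{T-t}$ matters: the paper's example with an extra logarithm loses every modulus of continuity. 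No compactness or Liouville argument is needed.
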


Let us explain the idea of the proof.
The new input is to construct a modulus of continuity $u=u(r, t)$ for $\Gamma=r v_\theta$, which satisfies the one dimensional drift diffusion equation in the half line:
\be
\lab{equro0}
\begin{cases}
\p^2_r u(r, t) + g(t) \p_r u(r, t) - \p_t u(r, t) = 0, \qquad (r, t) \in [0, \infty) \times [0, T),\\
u(0, t)=0, \quad u(r, 0)=u_0(r),
\end{cases}
\ee where $g=g(t)$ is in $C([0, T))$ and $g(t)>0$ for each $t$. We choose a $g(t) \ge \Vert v^-_r(\cdot, t) \Vert_\infty$ which is bounded and smooth before the first possible blow up time, at which moment it can be very singular.  It is known from \cite{SVZ} and \cite{W} that modulus of continuity is not preserved for the heat equation with some slightly super critical drifts. Are we running into the old trap?  It turns out that when the drift term depends only in time, the situation is different.   One hint of this gain of the spatial regularity is the equation on the whole line
\be
\lab{equro00}
\begin{cases}
\p^2_r u(r, t) + g(t) \p_r u(r, t) - \p_t u(r, t) = 0, \qquad (r, t) \in \mathbf{R}^1 \times [0, T),\\
 u(r, 0)=u_0(r).
\end{cases}
\ee The explicit solution is
\[
u(r, t) = \frac{1}{\sqrt{4 \pi t}} \int \exp \left(- \big|r - y + \int^t_0 g(s) ds \big|^2/(4 t)\right) u_0(y) dy.
\]We see that $\p_r u(\cdot, t)$ is bounded for $t>0$ independent of $g$ if $u_0 \in L^1$. But there is no such gain in regularity in the time direction.

Of course the half line case is different since the boundary value may interfere. Our job is to analyze this situation carefully. It turns out that after the above change of variable, one can turn \eqref{equro0}  into an initial Dirichlet problem for the standard heat equation on a time dependent domain. Using the standard boundary regularity theorem, which requires only the so called exterior measure condition, on can prove that $u$ is H\"older continuous under the one sided condition $g(t) \ge -c/\sqrt{T-t}$. The point is that only one sided critical condition is required on $v_r$. 

Next, based on the H\"older continuity of $u$, one uses comparison theorem (maximum principle), to deduce that $|r v_\theta (x, t)| = |\Gamma(x, t)| \le C |x'|^\alpha$. This is enough to prove regularity of \eqref{eqasns} by \cite{CFZ} mentioned above.
We will also give an example that the condition on $g=g(t)$ can not be improved by a log factor as far as regularity for $u$ is concerned. So we are still quite far from the a priori estimate that $\Vert v_r(\cdot, t) \Vert_\infty \in L^1[0, T]$, c.f. \cite{FGT} and \cite{Cons14}. Since a good regularity criteria can also help to construct potential blow up solutions by narrowing the pool of the candidate, such a result is potentially useful one way or the other. This result confirms the intuition that possible blow up is caused by rapid inward flows. Let us mention that in \cite{Px2} and \cite{Zz},  related regularity conditions $v_r(x, t) \ge -M/r$ were proven for the ASNS. But the critical lower bound there has a size restriction of $M=1$ and $M \in (1, 2)$ respectively.

\section{ Proof of  theorem \ref{thmain} }

Our starting point is the following lemma on the H\"older continuity of solutions of the one dimensional heat equation with a divergence free drift $g(t)=C/\sqrt{T-t}$ in the half line.
Let us recall that the standard De Giorgi-Nash-Moser space-time H\"older regularity result on $u$ requires $g$ being in $L^{2^+}_{loc}$.

\begin{lemma}
\lab{leduk} Suppose $g =K/\sqrt{T-t}$ where $K$ is any positive constant.
 The one dimensional heat equation \eqref{equro0} with drift $g$ in the half line
\be
\lab{eqku}
\begin{cases}
\p^2_\r u(\r, t) + g(t) \p_\r u(\r, t) - \p_t u(\r, t) = 0, \qquad (\r, t) \in [0, \infty) \times [0, T),\\
u(0, t)=0, \quad u(\r, 0)=\r,
\end{cases}
\ee has  a nonnegative, smooth solution $u$ with the following properties.

(a). There exist constants $C_0 \ge 1, \alpha \in (0, 1)$, depending only on $K$ and $T$, such that
\[
0 \le  u(\r, t) \le C_0 (\r^\alpha + \r ),   \quad \r \ge 0, \,  t \in [0, T).
\]

(b). There exist positive constants $\r_1=\r_1(K, T)>1$ and $C_1=C_1(K, T)$, such that
\[
\p_\rho u(\r, t) \le C_1, \quad \text{when} \quad \r \ge \r_1, \, t \in [0, T).
\]

(c). For every $\e>0$, there exists a constant $\r_0=\r_0(\e, K, T)>1$, such that
\[
u(\r, t) \ge (1-\e) \r, \quad \text{when} \quad \r \ge \r_0, \, t \in [0, T).
\]

\proof
\end{lemma}

 The uniqueness of $u$ follows from the maximum principle in classical theory since all data involved is smooth for $t <T$  and the initial value is linear, far below the required condition of exponential growth of order 2. This property will be used several times in the paper. The maximum principle also implies that $\p_\r u \ge 0$. A more direct proof is to make a change of variables as in \eqref{z=rhoa} below to eliminate the drift term and use $\epsilon (\rho^2+ 2 t)$ as a barrier function, then let $\epsilon \to 0$.

 For the existence, let
 $H=H(\r, t, \tilde{\r}, s)$ be the Dirichlet heat kernel in the half line. We can solve the integral equation
 \[
 u(\r, t) = \int^\infty_0 H(\r, t, \tilde{\r}, 0) \tilde{\r} d\tilde{\r} +
 \int^t_0 \int^\infty_0 H(\r, t, \tilde{\r}, s) g(s) \p_{\tilde{\r}} u(\tilde{\r}, s)
 d\tilde{\r} ds.
 \]Since $g \in C^\infty[0, T-\e]$, $\e>0$ then one can iterate the integral equation to obtain a solution $u$ of linear growth in the time interval $[0, T-\e]$.  Moreover one arrive at a $\Vert \p_\r u \Vert_\infty $ bound depends on $\Vert g \Vert_\infty$ which is finite in the time interval $[0, T-\e]$, $\e>0$ but which may blow up at time $T$.

Alternatively, for the existence, one can also solve the following Robin boundary value problem with initial value $1$, which is standard for $t \in [0, T)$:
\be
\lab{eqkurob}
\begin{cases}
\p^2_r v(r, t) + g(t) \p_r v(r, t) - \p_t v(r, t) = 0, \qquad (r, t) \in [0, \infty) \times [0, T),\\
\p_\r v(0, t) + g(t) v(0, t)=0, \quad v(r, 0)=1.
\end{cases}
\ee It is easy to see that $u(\r, t) \equiv \int^\r_0 v(l, t) dl$ is a solution to \eqref{eqku}.

To prove property (a),
let us write
\be
\lab{at=}
A(t) \equiv \int^t_0 g(s) ds = 2 K (\sqrt{T}-\sqrt{T-t}), \quad t<T.
\ee
Consider the spatial variable
\be
 \lab{z=rhoa}
 z=\rho + A(t)
\ee and the function
 \be
 \lab{nu=u}
 \nu=\nu(z, t)=\nu(\rho + A(t), t) \equiv u(\rho, t).
 \ee Then the domain for $\nu$ is
 \be
 \lab{domP}
 P \equiv \{ (z, t) \, | \, z > A(t), \quad t \in [0, T) \} \subset \mathbf{R}^2_+.
 \ee Since
\[
\p^2_\rho u(\rho, t)= \p^2_z \nu(z, t), \quad \p_t u(\rho, t) = \p_t \nu(z, t) + \p_z \nu(z, t) g(t),
\]we find
\[
\al
\p^2_\rho u(\rho, t) - \p_t u(\rho, t) &= \p^2_z \nu(z, t) - \p_t \nu(z, t) - \p_z \nu(z, t) g(t)\\
&=\p^2_z \nu(z, t) - \p_t \nu(z, t) - \p_\rho u(\rho, t) g(t).
\eal
\]This and \eqref{eqku} imply that $\nu=\nu(z, t)$ is a local $C^{2, 1}_{z, t}$ solution to the following one dimensional heat equation with Dirichlet boundary value in the time-varying domain $P$:
\be
\lab{equz0}
\begin{cases}
\p^2_z \nu(z, t) - \p_t \nu(z, t) = 0, \qquad (z, t) \in P,\\
\nu(z, t)=0, \,  z=  A(t), \quad t \ge 0;\\
  \nu(z, 0)=u_0(z)=z.
\end{cases}
\ee Notice that $f=z$ is a solution of the heat equation with the same initial value as $\nu$ and it is non-negative on the boundary $z=A(t)$. The maximum principle, which works for solutions of linear growth,  implies
\be
\lab{nu<z}
0 \le \nu(z, t) \le z
\ee and therefore
\[
u(\r, t) \le z = \r + A(t) \le \r + 2 K \sqrt{T}.
\]This yields the stated bound in part (a) when $\r \ge  2 K \sqrt{T}$.

Next we consider the remaining case when $0 < \rho \le 2 K \sqrt{T}$.

Notice that the left boundary curve $z=A(t)=\int^t_0 g(s) ds$ is  a parabolic boundary of the domain $P$ for $t \in (0, T)$.  Fixing a small $\delta>0$,  for any $r \in (0, \delta]$ and any boundary point $(A(t), t), t \in [\delta^2, T]$, we have
\be
\lab{q012}
|Q_2(A(t), t, r) \cap P^c | \ge c_0 |Q_2(A(t), t, r)|
\ee where $c_0$ is a positive constant depending only on $K$ and
\[
Q_2=Q_2(z, t, r) = \{ (y, s) \, | \, |y-z|<r, \quad t-r^2<s<t \}
\] is the standard parabolic cube in $\mathbf{R}^2$. This property just means that parabolic cubes whose "centers" straddle the parabolic  boundary of $P$ must intersect the exterior domain $P^c$ by at least a definite portion. It is here that we use crucially $\eqref{at=}$, which is scaling invariant under the standard parabolic scaling.

Therefore, the domain $P$ satisfies Condition A, i.e. (6.46) p133 in the book \cite{Lieb}. This condition is  also referred to as exterior measure condition. We mention the typo there that $P$ was written in Condition A instead of $P^c$, but as well known the proof requires $Q_2 \cap P^c$ takes a definite portion of the parabolic cube $Q_2$.  Now we can apply Theorem 6.32 in the said book to conclude that for some $\alpha \in (0, 1)$ and $C_*>0$:
\be
\lab{0uhold}
\Vert \nu \Vert_{C^{\alpha, \alpha/2}(Q_2(z, t, \delta) \cap P)} \le \frac{C_*}{\delta^\alpha}
\Vert \nu \Vert_{L^\infty(Q_2(z, t, 2 \delta) \cap P)}, \quad (z, t) \in P, \quad t > \delta^2.
\ee Let us explain why $\alpha \in (0, 1)$ and $C_*>0$ are absolute constants. First, we mention that in the cited theorem, the boundary of the domain is assumed to be locally $C^1$, which is true for $t<T$. However, the constants $\alpha$ and $C_*$ are independent of the $C^1$ norm of the boundary since the proof relies only on the weak parabolic Harnack inequality in the De Georgi-Nash-Moser theory for the solutions after $0$ extension beyond the boundary. In fact they only depend on the constant in (6.46) in the cited book, which is $c_0$ here and the coefficient in the elliptic part of the equation, which is $1$ here. Note also that there are no lower order terms in the equation. Another proof with a different method for \eqref{0uhold} can be found in \cite{FS}, which is further developed in \cite{CDK}.

Since
\[
\nu(\rho + A(t), t) \equiv u(\rho, t),
\]we deduce
\[
u(\r, t) \le C_0 \r^\alpha, \quad \text{if} \quad \r \le 2 K \sqrt{T}, \,  t \ge \delta^2.
\]Lastly, if $\r \le 2 K \sqrt{T}$ and $t \in [0, \delta^2]$, the property is a standard boundary regularity result for the heat equation, since $b=b(t)$ here is smooth and bounded and the initial value is compatible.
This proves (a).

 Next we prove (b). For $z>A(t)+1$, we can use \eqref{equz0} and \eqref{nu<z} and standard interior gradient estimate of the heat equation on  small parabolic cubes of fixed size to deduce
\[
0 \le \p_z \nu(z, t) \le C z, \quad z>A(t)+1.
\]Here $C$ is an absolute constant. Note $\p_\rho \nu(z, 0)=1$ and $0 \le \p_\rho \nu(A(t)+1, t) \le C(A(T)+1)$. Hence we can apply the maximum principle on the space time domain $\{(z, t) \, | \, z>A(t)+1, t \in [0, T) \}$ for $\p_\rho \nu$, which is a solution of the heat equation of linear growth. This infers
\[
\p_z \nu \le \max\{1, C(A(T)+1)\},
\]
which proves statement (b) since $\p_z \nu(z, t) = \p_\r u(\r, t)$ by \eqref{nu=u}.

\medskip

For property (c), it is sufficient to prove the following statement for $\p_\r u$:
for every $\e>0$ and $T>1$, there exists a constant $\r_0=\r_0(\e, k, T)>1$, such that
\be
\lab{du=1}
\p_\r u(\r, t) \ge (1-\e), \quad \text{when} \quad \r \ge \r_0, \, t \in [0, T].
\ee If this is false, there exists one small $\e_0>0$, and sequences $t_j \in [0, T]$ and $\r_j \to \infty$ such that
\be
\lab{rhoduj}
\lim_{j \to \infty} \p_\r u(\r_j, t_j) \le 1-\e_0.
\ee Without loss of generality, we can assume $t_j \to t_* \in (0, T]$. Consider the shifted functions
\[
h_j=h_j(l, t)= \p_l u(l-A(t)+\r_j, t), \quad l \ge -\r_j/2,   \, t \in [0, T].
\]By part (b), $h_j$ are uniformly bounded when $l \ge -\r_j/2$. Note $h_j$ is a solution of the heat equation and $h_j(\cdot, 0)=1$. From the standard regularity theory of, we know that a subsequence of $\{h_j \}$ converges in $C^{2, 1}_{x, t, loc}$ norm, to a bounded, smooth solution $h$ of the heat equation:
\[
\begin{cases}
\p^2_l h(l, t)  - \p_t h(l, t) =0, \quad (l, t) \in \mathbf{R}^1 \times [0, T]\\
h(l, 0)=1>0.
\end{cases}
\] But $h(A(t_*), t_*) \le 1-\e_0 <1$ due to \eqref{rhoduj}, which violates the maximum principle. Hence \eqref{du=1} is true. This proves statement (c) and the lemma.
\qed

\medskip

 One consequence of the above lemma is the following result which will be used in proving the main theorem.

\begin{lemma}
\lab{lem1ddifs1}
Let $g=g(t)>0$ be as in the previous lemma. Let $u_0$ be a function in $C^3([0, \infty))$ such that $0 \le u_0(\r) \le \alpha_0 \r$ for a positive constant $\alpha_0$.
The following initial boundary value problem with $g$ as drift and $u_0$ as the initial value
\be
\lab{equro2}
\begin{cases}
\p^2_\rho u(\rho, t) + g(t) \p_\rho u(\rho, t) - \p_t u(\rho, t) = 0, \qquad (\rho, t) \in [0, \infty) \times [0, T),\\
u(0, t)=0, \quad u(\rho, 0)=u_0(\rho),
\end{cases}
\ee has a local $C^{2, 1}_{\rho, t}$ solution $u=u(\rho, t) \ge 0$ with the following properties.

(a). There exist positive constants $C_0$ and $\alpha \in (0, 1)$, which depends only on the constant $K$ in $g$ such that
\be
\lab{ud0jie}
0 \le u(\r, t) \le C_0 \alpha_0 (\r^\alpha +\r), \quad \qquad (\rho, t) \in [0, \infty) \times [0, T).
\ee

(b). Suppose in addition that $u_0(\r) = \alpha_0 \r$ when $\r \ge 10$ . For every $\e>0$, there exists a constant
$\r_0=\r_0(K, \e)>1$, such that
\[
u(\r, t) \ge (1-\e) \alpha_0 \rho, \quad \text{when} \quad \r \ge \r_0, \, t \in [0, T).
\]

(c). Moreover,
\be
\lab{ud2jie}
0 \le \p_\r u(\r, t) \le C_0 \alpha_0,  \quad \qquad (\rho, t) \in [\r_0, \infty) \times [\delta, T)
\ee
\proof
\end{lemma}

(a).

From the assumption, there exist a positive constant $\alpha_0$, depending only on $u_0$ such that
\[
u_0(\r) \le \alpha_0 \r, \quad \r \ge 0.
\]Let $u_1 \ge 0$ be the solution, given in the previous lemma,  to the problem
\be
\lab{equrok}
\begin{cases}
\p^2_\rho u_1(\rho, t) + g(t) \p_\rho u_1(\rho, t) - \p_t u_1(\rho, t) = 0, \qquad (\rho, t) \in [0, \infty) \times [0, T),\\
u_1(0, t)=0, \quad u_1(\rho, 0)=\alpha_0 \r.
\end{cases}
\ee

For any large integer $j$, let $h_j$ be the solution of the initial-boundary value problem in the bounded domain $[0, j]$:
\be
\lab{equro2i}
\begin{cases}
\p^2_\rho h_j(\rho, t) + g(t) \p_\rho h_j(\rho, t) - \p_t h_j(\rho, t) = 0, \qquad (\rho, t) \in [0, j] \times [0, T),\\
h_j(0, t)=0, \quad h_j(j, t) = u_1(j, t), \quad u(\rho, 0)=u_0(\rho),
\end{cases}
\ee

 Subtraction of this by \eqref{equrok} yields, for $(\rho, t) \in [0, j] \times [0, T)$, that
\be
\lab{equrou-uk}
\begin{cases}
\p^2_\rho (h_j- u_1)(\rho, t) + g(t) \p_\rho (h_j-u_1)(\rho, t) - \p_t (h_j-u_1)(\rho, t)
=  0,  \\
(h_j-u_1)(0, t)=(h_j-u_1)(j, t) =0, \quad (h_j-u_1)(\rho, 0)=u_0(\r)-\alpha_0 \r \le 0 .
\end{cases}
\ee  The basic maximum principle on bounded domains infers that
\[
0 \le h_j(\r, t) \le u_1(\r, t), \quad (\rho, t) \in [0, j] \times [0, T).
\]Now Lemma \ref{leduk} tells us that there exists a constant $C_0 \alpha_0$,  that
\[
0 \le h_j(\r, t) \le u_1(\r, t) \le C_0 \alpha_0(\r^\alpha+ \r), \quad (\rho, t) \in [0, j] \times [\delta, T).
\] Note that $\alpha_0=1$ in the statement of that lemma and the linearity of the problem allows us to multiply it back.

By standard Schauder regularity theory for parabolic equations (c.f. \cite{Lieb} Ch. IX), a subsequence of $\{ h_j \}$ converges in local $C^{2, 1}_{x, t}$ sense, to a solution $u$ of \eqref{equro2} in $[0, \infty) \times [0, T)$ and $u$ satisfies
\be
\lab{udeltjie}
0 \le u \le C_0 \alpha_0 (\r^\alpha + \r), \quad (\rho, t) \in [0, \infty) \times [\delta, T).
\ee
This and uniqueness proves  \eqref{ud0jie}.

(b). This is identical to the proof of part (c) of the previous lemma.

(c).
This follows from part (b) of the previous lemma.
\qed
\medskip

Now we are in a position to give a proof of the main theorem.

\proof  {\it Step 1.}  Let $v=v(x, t)$ be a Leray-Hopf solution of the ASNS such that $v_0 \in L^2 \cap L^\infty \cap C^3$. It is well known that $v$ is smooth at least in a short time interval. So we can suppose that $v$ is smooth in the time interval $(0, T)$. We will show that it is smooth in $(0, T]$.   Recall that for ASNS, the scaling invariant quantity $\Gamma = r v_\th$ satisfies the equation without the pressure term:
\be
\lab{eqgam2}
\begin{cases}
\al
&(\p^2_r + \p^2_{x_3})  \Gamma  - \frac{1}{r} \p_r  \Gamma  -(v_r, v_3) \cdot (\p_r, \p_{x_3})  \Gamma  - \p_t \Gamma=0, \quad r \ge 0, t \in (0, T),\\
&\Gamma(0, t)=0, \quad t \ge 0;  \quad \Gamma(r, 0)= \Gamma_0(r), \quad |\Gamma_0(r)| \le \alpha_0 r.
\eal
\end{cases}
\ee Here $\alpha_0 = \Vert v_\th(\cdot, 0) \Vert_\infty$. In addition $\p_r \Gamma(0, t)=0, \, t<T$.

The modulus of continuity we look for is a $C^{2, 1}_{r, t}$ function $\La=\La(r, t)$ of $r, t$ only, which satisfies the equation
\be
\lab{eqLa}
\begin{cases}
\p^2_r \La(r, t) - \frac{1}{r} \p_r \La(r, t) + g(t) \p_r \La(r, t) - \p_t \La(r, t) = 0, \qquad (r, t) \in (0, \infty) \times [0, T),\\
\La(0, t)=0, \quad t \ge 0;  \\
\La(r, 0)= \La_0(r) \ge |\Gamma_0(r, x_3)|,
\end{cases}
\ee where $g=g(t)$ is a smooth function on $[0, T)$ such that
\be
\lab{gt>vr}
 g(t) \ge  \Vert v^-_r(\cdot, t) \Vert_\infty
\ee and
\[
\La_0(r)=
\begin{cases}
 \alpha_0 r^2, \quad r \le 1/2, \\
\text{smooth and monotone} \quad 1/2<r<1, \, 0<\La'_0 \le 2 \alpha_0,\\
\alpha_0 r, \quad  r \ge 1.
\end{cases}
\]We observe that $v_\th(\cdot, 0)$ always vanishes at $r=0$ so that $\Gamma(\cdot, 0)$ vanishes at $r=0$ at least order $2$. So the initial value for $\La$ can be realized.
By the assumption on $v_r$, we can take, for some constant $K>0$,
\be
\lab{gin1}
g =K/\sqrt{T-t},
\ee which allows us to use Lemma \ref{lem1ddifs1} later.

In the next step, we prove that such a function $\La$ with the property that $\p_r \La \ge 0$ can be found.
\medskip

{\it Step 2.}

 {\it step 2.1.} Existence and regularity of $\La$.

  The existence of a generic solution to \eqref{eqLa} is expected since $g$ is bounded and smooth for $t \in [0, T-\e]$, $\e>0$ so that we can just work on $[0, T-\e]$ first and then let $ \e \to 0$. So we will always assume $t \in [0, T-\e]$ in this sub-step. Note all the bounds in this sub-step may blow up when $\e \to 0$.

 However, there is the singularity $-1/r$ in the drift term in \eqref{eqLa}, which,  although being mild,  is still a critical one. Therefore certain care is needed in the proof which we start now.
It is easy to check that $\La$ is a solution to \eqref{eqLa} if and only if the function
\[
f=\La /r=\La(r, t)/r
\] is a solution of the  diffusion equation of variable $(r, t)$ in the cylindrical coordinates:
\be
\lab{eqrlam}
\Delta f - \frac{1}{r^2} f + g(t) \p_r f + \frac{g(t)}{r} f -\p_t f=0,   \quad f(r, 0)=\La(r, 0)/r.
\ee Here $\Delta=\p^2_r + \frac{1}{r} \p_r$ is the radial Laplacian in $\mathbf{R}^2$. Note that the initial value $f(r, 0)$ is bounded. So it suffices to prove the existence and regularity of $f$. Using the property that the singularity of the potential is the standard inverse square with  a good sign, the potential $\frac{1}{r}$ is of order $-1$, hence sub-critical,  and that the initial value is smooth and bounded, we will show that this equation has a  bounded solution which is smooth for $r>0$ and $t<T$, and which vanishes at $r=0$. Once this is done, we will know that  $\La= r f$ is also smooth for $r>0$ and vanishes at order $2$ at $r=0$.

We will take the following sub-steps in proving the  above assertion.
1. derive the equations for $ f \cos \theta$ and $f \sin \theta$.  2. convert these equations to integral equations \eqref{eqfi}. 3. solving the integral equations by iteration to obtain H\"older continuous solutions $f$. 4. prove further that $\p_r f$ is H\"older continuous. 5. existence and regularity of $\La$ follows from 4 since $\La= rh$.

So, we start by considering the two functions
\be
\lab{f1f2sin}
f_1=f(r, t) \cos \theta, \qquad f_2=f(r, t) \sin \theta.
\ee Using \eqref{eqrlam}, direct computation shows, for $i=1, 2$, that, for $x, y \in \mathbf{R}^2$, $t \in [0, T)$, $r=|x|, \, x=(r \cos \theta, r \sin \theta)$, the following simpler equations hold:
\be
\lab{fipde}
\Delta f_i(x, t) + g(t) \p_{|x|} f_i(x, t) + \frac{g(t)}{r} f_i - \p_t f_i(x, t)=0,
\ee and hence
\be
\lab{eqfi}
\al
f_i(x, t)&=\int G_0(x, t; y, 0) f_i(y, 0) dy + \int^t_0 \int G_0(x, t; y, s) \frac{g(s)}{|y|} \p_{|y|} ( f_i(y, s) |y|)  dyds.
\eal
\ee Here $G_0$ is the standard heat kernel in $\mathbf{R}^2$. After integration by parts in the polar coordinates, we deduce:
\be
\lab{eqfiint}
\al
f_i(x, t)&=\int G_0(x, t; y, 0) f_i(y, 0) dy
 - \int^t_0 \int \p_{|y|} G_0(x, t; y, s)  g(s) f_i(y, s) dyds.
\eal
\ee
Since $g$ is bounded and smooth in $[0, T-\e]$ and the kernels are integrable in space time, by standard iteration, we see that there exists bounded solutions for $f_i$. Indeed, for a small $t_1$ and $t \in [0, t_1]$ and $f_i \in L^\infty(\mathbb{R}^2 \times [0, T_1])$,  one can consider the integral operator $M$:
\[
 (M f_i)(x, t) =\int G_0(x, t; y, 0) f_i(y, 0) dy
 - \int^t_0 \int \p_{|y|} G_0(x, t; y, s)  g(s) f_i(y, s) dyds.
\] Using the contraction mapping principle, one obtains a fixed point for $M$, which solves \eqref{eqfiint}. Next, one can use $t_1$ as the new initial value and repeat this process to another time $t_2>t_1$ and so on. Since the equation is linear, this can be repeated before the given time $T>0$.

The function $f_i$ obtained in the previous paragraph are weak solutions of \eqref{fipde}, whose coefficients are smooth except at $r=0$.  It is easy to see that $f_i$ are smooth except when $r=0$ and $f_i$ are H\"older continuous through out. For the latter, see also the comments before \eqref{eqhf0} below. Since $f=e^{-i \theta} (f_1 + i f_2)$, we see that $f$ is also bounded. This  implies that $f_i$ vanishes when $r=0$ by taking $r \to 0$ along the line $\theta=\pi/2$ and $\theta=0$ and using $\cos (\pi/2)=0$ and $\sin 0=0$ in \eqref{f1f2sin}. Hence $f(0, t)=0$. This process gives a proof of the existence of $f$.
Since
\be
\lab{lamf12}
\La= r f = r e^{-i \theta} (f_1 + i f_2),
\ee this proves the existence of $\La$.
 Notice that, when $r >0$ and $t<T$, all coefficients in the equation are locally bounded and smooth, so the standard regularity theory implies that $\nabla f$ is locally bounded. We mention that this bound may depend on $r>0$, $g$ and $t$ and is not uniform yet.
 In the sequel, we will need to use the maximum principle on $\La$ or $\p_r \La$ so we need to prove $\La$ and $\p_r \La$ are smooth in the interior and continuous up to the boundary. The former is already proven since $f_i$ are smooth in the interior.  Next we prove that $\p_r \La$ is continuous up to the boundary.
From \eqref{lamf12} and using the property that $\La$ depends only on $r$ and $t$ but not $\theta$, it is sufficient to prove $\p_r f_i$ are H\"older continuous in $r$ through out.

To this end, we rewrite \eqref{eqfi} as
\be
\lab{eqfi2}
\al
f_i(x, t)&=\int G_0(x, t; y, 0) f_i(y, 0) dy + \int^t_0 \int G_0(x, t; y, s) g(s) \p_{|y|}  f_i(y, s)  dyds\\
&\qquad + \int^t_0 \int G_0(x, t; y, s) \frac{g(s)}{|y|} f_i(y, s)  dyds,
\eal
\ee which implies
\be
\lab{eqfi3}
\al
\p_{|x|} f_i(x, t)&=\underbrace{\int \p_{|x|} G_0(x, t; y, 0) f_i(y, 0) dy}_{I_1} + \underbrace{ \int^t_0 \int \p_{|x|} G_0(x, t; y, s) g(s) \p_{|y|}  f_i(y, s)  dyds}_{I_2}\\
&\qquad + \underbrace{\int^t_0 \int \p_{|x|} G_0(x, t; y, s) \frac{g(s)}{|y|} f_i(y, s)  dyds }_{I_3}.
\eal
\ee Therefore
\be
\lab{eqdfi}
\al
|\p_{|x|} f_i(x, t)| &\le \underbrace{ \int  G_0(x, t; y, 0) |\nabla_y f_i(y, 0)| dy
+ C \int^t_0 \int |\p_{|x|} G_0(x, t; y, s)| \,  \frac{g(s)}{|y|^{1-\alpha}}   dyds}_{K(x, t)} \\
&+ \int^t_0 \int |\p_{|x|} G_0(x, t; y, s)|  \, |\p_{|y|} f_i(y, s)| g(s) dyds.
\eal
\ee Here we have used the property that $f_i$ are H\"older continuous with parameter $\alpha>0$.

By the choice of the initial value and the standard bound
\[
|\p_{|x|} G_0(x, t; y, s)| \le \frac{C}{(t-s)^{3/2}} e^{-\frac{|x-y|^2}{5(t-s)}},
\]both terms in $K(x, t)$ on the right hand side is bounded.
For fixed $\e>0$, recall that $g(t)$ is bounded  for $t \in [0, T-\e]$. Notice also that
$|\p_{|x|} G_0(x, t; y, s)|$ is a space-time integrable kernel. So by standard iteration
we deduce
\be
\lab{dfijie}
|\p_{|x|} f_i(x, t)| \le C(t, \Vert g \Vert_{L^\infty[0, T-\e]}),
\ee which shows $\p_r \La$ is bounded by \eqref{lamf12}. With \eqref{dfijie} at hand, it is straight forward to check that the terms $I_1$, $I_2$ and $I_3$ in \eqref{eqfi3} are H\"older continuous. Indeed the integrand beside the kernel in $I_1$ is smooth and the integrands besides the kernel  in $I_2$ and $I_3$ are now bounded. Recall now $|f_i(y, s)|/|y| \le C$ due to $f_i(0, s)=0$ and \eqref{dfijie}. Hence $\p_{|x|} f_i$ and consequently $\p_r \La$ is H\"older continuous.  Another way to see it is to consider the nonhomogeneous heat equation
\be
\lab{eqhf0}
\begin{cases}
\Delta H- \p_t H= F, \quad \mathbf{R}^n \times [0, \infty),\\
H(\cdot, 0)=0.
\end{cases}
\ee By direct calculation using the heat kernel or using standard theory, c.f. \cite{Lieb} Theorem 6.29, p130, if $H$ is a solution which is locally in the energy space and $F \in L^\infty$, then $\nabla H$ is H\"older continuous, since $\Delta \nabla  H- \p_t \nabla H= \nabla  F$ with $F \in L^\infty$.

\medskip

{\it step 2.2.} Monotonicity of $\La(\cdot, t)$.

 So it remains to check $\p_r \La(r, t) \ge 0$, using the boundary and initial conditions: $\p_r \La(0, t) \ge 0$ and $\p_r \La(r, 0) \ge 0$  for $r>0$ and $t>0$. The main issue is to deal with the singular terms in the equation for $\p_r \La(r, t)$.

{\it step 2.2.1.}

 Note that for $r>0, t<T$,  $\p_r \La$ satisfies the equation.
\be
\lab{eqprla}
\begin{cases}
\p^2_r \p_r \La(r, t) - \frac{1}{r} \p_r  \p_r \La(r, t) + g(t) \p_r \p_r \La(r, t) - \p_t \p_r \La(r, t) + \frac{1}{r^2}   \p_r \La(r, t) = 0,\\
\p_r \La(0, t) = 0, \quad \p_r \La(r, 0)=\La'_0(r).
\end{cases}
\ee  Due to the singular zero order term involving $1/r^2$, one needs to handle it with care since the maximum principle does not apply directly.

 First we observe that
\be
\lab{prlainfty}
\liminf_{r \to \infty} \p_r \La(r, t) \ge 0, \quad t \in (0, T).
\ee If not, there exists a $t_0 \in (0, T)$ and a sequence $r_k \to \infty$ such that
\be
\lab{rky0-c}
\liminf_{k \to \infty} \p_r \La(r_k, t_0) = -c <0.
\ee Consider the shifted functions
\[
h_k=h_k(l, t)= \p_r \La(r_k+l, t), \quad l \ge -r_k/2,   \, t \in [0, t_0].
\]Since $g$ is bounded and smooth in $[0, t_0]$ and $\p_r \La(r, 0)=\alpha_0$ for $r \ge 2$, from the standard regularity theory of the heat equation, we know that a subsequence of $\{h_k \}$ converges in $C^{2, 1}_{loc}$ norm, to a smooth solution $h$ of
\[
\begin{cases}
\p^2_l h(l, t) + g(t) \p_l h(l, t) - \p_t h(l, t) =0, \quad (l, t) \in \mathbf{R}^1 \times [0, t_0]\\
h(l, 0)=\alpha_0>0.
\end{cases}
\]Note the terms involving $1/r$ disappears since $r=r_k +l$. But $h(0, t_0)=-c <0$ due to \eqref{rky0-c}, which violate the maximum principle. Hence \eqref{prlainfty} is true. So if $\p_r \La$ changes sign, it must occur in a finite interval of $r$. This property allows us to use a limit argument later on.

For positive integers $i$, we select a smooth, increasing function $\phi_i=\phi_i(r)$:
\[
\phi_i(r)=0,  \, r \in [0, 1/i]; \quad  0 \le \phi_i(r) \le 1, \, r \in (1/i, 2/i); \quad
\phi_i(r)=1, \, r \ge 2/i.
\]We also choose $\phi_i$ in such a way that $\{ \phi_i \}$ is an increasing sequence.

Let $Z_i$ be the bounded and smooth solution to the equation in \eqref{eqprla} in the truncated domain $[1/i, \infty)$:
\be
\lab{eqzi}
\begin{cases}
\p^2_r Z_i(r, t) - \frac{1}{r} \p_r Z_i(r, t)  + g(t) \p_r Z_i(r, t) - \p_t Z_i(r, t) + \frac{1}{r^2}  Z_i(r, t) = 0,  \; r \ge 1/i,   \, t \in [0,T),\\
Z_i(1/i, t) = 0, \quad Z_i(r, 0)=  \phi_i \La'_0(r) \ge 0.
\end{cases}
\ee For each fixed $i$, the coefficients of the equation in the truncated domain are bounded and smooth, therefore the standard maximum principle can be applied to tell us:
\be
\lab{zi>0}
 Z_i(r, t) \ge 0.  \quad r \ge 1/i, \, t \in [0, T).
\ee Here is the detail.  Consider the auxiliary functions $Z_i + \e e^{ A t}$  for a small $\e>0$ and $A >>1$. Then
\be
\lab{eqzi2}
\begin{cases}
\left(\p^2_r  - \frac{1}{r} \p_r   + g(t) \p_r  - \p_t \right) (Z_i + \e e^{ A t})
 =-\frac{1}{r^2}  Z_i(r, t) - \e A e^{ A t},  \; r \ge 1/i,   \, t \in [0,T),\\
Z_i(1/i, t) = 0, \quad Z_i(r, 0)=  \phi_i \La'_0(r) \ge 0.
\end{cases}
\ee  Using the same sliding to infinity method at the beginning of this sub-step, we know that $Z_i(r, t) \ge 0$ when $r$ is sufficiently large. Therefore, if $Z_i + \e e^{ A t}$ becomes negative somewhere, it must first become $0$ at an interior point $(r_0, t_0)$. We set $t_0$ to be the first moment this happens. Then, at this point, we see that
$Z_i(r_0, t_0)= - \e e^{ A t_0}$. The left hand side of \eqref{eqzi2} is non-negative since $(x_0, t_0)$ is a local minimum for $Z_i + \e e^{ A t}$. Hence
\[
0 \le \frac{1}{r^2_0} \e e^{ A t_0}  - A \e  e^{ A t_0}= \left(\frac{1}{r^2_0}   - A \right) \e  e^{ A t_0}.
\] Since $ r_0 \ge 1/i$ and $i$ is fixed, this is a contradiction when $A$ is sufficiently large. Therefore
\[
Z_i + \e e^{ A t} \ge 0,
\]
Letting $\e \to 0$, we deduce $Z_i \ge 0$.

Note that the initial values $\phi_i \La'_0(r)$  are  bounded, nonnegative, smooth and form an increasing sequence. Hence $Z_{i+1}-Z_i$ satisfy the equation
\be
\lab{eqz-zi}
\begin{cases}
\left( \p^2_r  - \frac{1}{r} \p_r   + g(t) \p_r  - \p_t  + \frac{1}{r^2} \right) (Z_{i+1}- Z_i) (r, t) = 0,  \; r \ge 1/i,   \, t \in [0,T),\\
(Z_{i+1}-Z_i)(1/i, t) \ge 0, \quad (Z_{i+1}-Z_i)(r, 0) \ge 0.
\end{cases}
\ee By the maximum principle again, we deduce
\be
\lab{zimono}
Z_{i+1}(r, t) \ge  Z_i(r, t), \quad r \ge 1/i, \, t \in [0, T),
\ee i.e. $\{Z_i \}$ is an increasing sequence.

{\it step 2.2.2.}

Next we show that $Z_i$ are uniformly bounded on compact sets.

Let us define
\be
\lab{delai}
\La_i= \La_i(r, t) = \int^r_{1/i} Z_i(s, t) ds.
\ee Direct computation using \eqref{eqzi} shows:
\[
\p_r \left( \p^2_r \La_i(r, t) - \frac{1}{r} \p_r \La_i(r, t) + g(t) \p_r \La_i(r, t) - \p_t \La_i(r, t) \right) = 0, \qquad (r, t) \in [1/i, \infty) \times [0, T).
\]Hence
\[
\p^2_r \La_i(r, t) - \frac{1}{r} \p_r \La_i(r, t) + g(t) \p_r \La_i(r, t) - \p_t \La_i(r, t) = \p^2_r \La_i(1/i, t),
\]where we have used the identities $\p_r \La_i(1/i, t)= Z_i(1/i, t)=0$ and $\La_i(1/i, t)=0$. Since $\p_r \La_i = Z_i \ge 0$, we also know that the right second derivative
\[
\p^2_r \La_i(1/i, t) \ge 0.
\]Therefore
\[
\begin{cases}
\p^2_r \La_i(r, t) - \frac{1}{r} \p_r \La_i(r, t) + g(t) \p_r \La_i(r, t) - \p_t \La_i(r, t) \ge 0, \qquad (r, t) \in [1/i, \infty) \times [0, T), \\
\La_i(1/i, t)=0, \quad \La_i(r, 0) = \int^r_{1/i} \phi_i(s) \La'_0(s) ds \le \La_0(r).
\end{cases}
\]
This shows that $\La_i$ is a sub-solution of the  equation of  $\La$ in \eqref{eqLa}.
By the maximum principle, which is applicable since the equation has no potential terms, we deduce:
\be
\lab{lai<la}
0 \le \La_i(r, t) \le \La(r, t),   \quad  (r, t) \in [1/i, \infty) \times [0, T).
\ee Thus $\La_i$ are uniformly bounded in compact sets. Now by standard regularity theory (c.f. \cite{Lieb} Ch. IV), we see that
$\{\La_i \}$ is a locally compact sequence in $C^{2, 1, \alpha}_{x, t}$ topology,  in any compact sub-domains of $(0, \infty) \times [0, T)$. Recall that $g=g(t)$ defined in
\eqref{gin1} is smooth in $[0, T)$ so it is H\"older continuous in any compact sub-domain. So the Ascoli-Arzela theorem tells that a subsequence, denoted the same way, converges in the local $C^{2, 1, \alpha}_{x, t}$ norm, to a function $\La_*$, which solves the same equation and the same initial-boundary conditions as $\La$, namely,
\be
\lab{eqLa2}
\begin{cases}
\p^2_r \La_*(r, t) - \frac{1}{r} \p_r \La_*(r, t) + g(t) \p_r \La_*(r, t) - \p_t \La_*(r, t) = 0, \qquad (r, t) \in (0, \infty) \times [0, T),\\
\La_*(0, t)=0, \quad t \ge 0;  \\
\La_*(r, 0)= \La_0(r).
\end{cases}
\ee Observe that due to the second order decay of $\La_0$ near $r=0$, the initial value of $\La_i(r, 0)=\int^r_{1/i} \phi_i(s) \La'_0(s) ds $ actually converge in $C^{1, \alpha}$ norm to $\La_0$.  Let us mention that  Ascoli-Arzela theorem only requires a uniform bound for $\La_i$ in $C^{2, 1, \alpha}_{x, t}$ in every compact domain of $(0, \infty) \times [0, T)$, not in the full domain.

Bear in mind that $0 \le \La(r, t) \le C r^2$ when $r$ is small by step 2.1. This together with \eqref{lai<la} shows that $\La_*$ is a solution which is smooth in the interior and continuous up to the boundary.

 Recall from \eqref{delai} that $Z_i = \p_r \La_i$.  Hence a subsequence of  $\{ Z_i \}$ converges to $\p_r \La_*$ in local $C^{1, \alpha}_{x, t}$ norm, implying
\be
\lab{la2'>0}
\p_r \La_*(r, t) = \lim_{i \to \infty} Z_i(r, t) \ge 0,
\ee since $Z_i \ge 0$ by \eqref{zi>0}.

{\it step 2.2.3.} We prove $\La_*=\La$.

Finally from \eqref{eqLa2} and \eqref{eqLa}, we know that $\La-\La_*$ satisfies the same equation with $0$ initial condition and $0$ boundary value. Moreover $\La-\La_*$ is bounded on compact sets, grows at most linearly as $r \to \infty$, smooth in the interior and continuous up to the boundary. We use the method in Step 1 by considering the function $(\La-\La_*)/r$, which is bounded and satisfies the equation in
\eqref{eqrlam} with $0$ initial value and good signed potential term. Then the maximum principle for that equation again tells us $\La=\La_*$. Since this is a key step, we will present the detail here.
Write
\[
\zeta =\zeta (r, t) = (\La-\La_*)/r.
\]As in \eqref{eqrlam}, $\zeta$ is a solution to the problem in $\mathbf{R}^2 \times [0, T)$:
\be
\lab{eqrlamz}
\Delta \zeta - \frac{1}{r^2} \zeta + g(t) \p_r \zeta + \frac{g(t)}{r} \zeta -\p_t \zeta=0,   \quad \zeta(r, 0)=0.
\ee Here $\Delta=\p^2_r + \frac{1}{r} \p_r$ is the radial Laplacian in $\mathbf{R}^2$ again. Note that $\zeta$ is a bounded, smooth solution. Using the sliding to infinity method in the  sub-step 2.2.1, we see that
$\lim_{r \to \infty} \zeta(r, t)=0$. Hence, if $\zeta$ becomes negative somewhere, it must happen in a compact domain of $\mathbf{R}^2$.   Consider the auxiliary functions $\zeta + \e e^{ A t}$  for a small $\e>0$ and $A >>1$. Then
\be
\lab{eqrlamz2}
\begin{cases}
\left(\Delta    + g(t) \p_r  - \p_t \right) (\zeta + \e e^{ A t})
 =\frac{1}{r^2}  \zeta (r, t)- \frac{g(t)}{r} \zeta - \e A e^{ A t},   \, t \in [0,T),\\
\zeta(r, 0)=  0.
\end{cases}
\ee It suffices to prove that $\zeta(\cdot, t)=0$ for $t \in [0, T-\delta]$ where $\delta$ is any small positive number. So we fix a $\delta>0$ for now.
As explained above, if $\zeta + \e e^{ A t}$ becomes negative somewhere in $\mathbf{R}^2 \times [0, T-\delta]$, it must first become $0$ at an interior point $(r_0, t_0)$. We set $t_0$ to be the first moment this happens. Then, at this point, we see that
$\zeta(r_0, t_0)= - \e e^{ A t_0}$. The left hand side of \eqref{eqrlamz2} is non-negative since $(x_0, t_0)$ is a local minimum for $\zeta + \e e^{ A t}$. Hence
\[
\al
0 &\le -\frac{1}{r^2_0} \e e^{ A t_0} + \frac{g(t_0)}{r_0} \e e^{ A t_0} - A \e  e^{ A t_0}= \left(- \frac{1}{r^2_0}  + \frac{g(t_0)}{r_0} - A \right) \e  e^{ A t_0}\\
&=-\frac{A r^2_0 - g(t_0) r_0 + 1}{r^2_0} \e  e^{ A t_0}=
-\frac{(g(t_0) r_0)^2 - g(t_0) r_0 + 1 + (A-g^2(t_0)) r^2_0}{r^2_0} \e  e^{ A t_0}\\
&=-\frac{(g(t_0) r_0-0.5)^2 + 0.75  + (A-g^2(t_0)) r^2_0}{r^2_0} \e  e^{ A t_0}.
\eal
\] When $A = \sup_{t \in [0, T-\delta]} g^2(t)$ which is finite, the right hand side of the preceding inequality is negative, thus we reach a contradiction. Hence $\zeta \ge - \e e^{A t}$ and  $\zeta \ge 0$ after letting $\e \to 0$. Similarly $\zeta \le 0$. Therefore $\La_*=\La$ for $t \le T-\delta$. Letting $\delta \to 0$, we know  $\La_*=\La$ everywhere.

From,
$
\La_*=\La
$ and  \eqref{la2'>0}, we deduce
\[
\p_r \La \ge 0,
\] completing Step 2.

\medskip

{\it Step 3.} We will show that the function $\La$ is an upper bound for $\Gamma$.

Subtracting $\La$ from the equation for $\Gamma$, we deduce
\be
\lab{eqgala}
\begin{cases}
\al
&(\p^2_r + \p^2_{x_3})  \left( \pm \Gamma - \La \right) - \frac{1}{r} \p_r \left( \pm \Gamma - \La \right) -(v_r, v_3) \cdot (\p_r, \p_{x_3}) \left( \pm \Gamma - \La \right) - \p_t \left( \pm \Gamma - \La \right)\\
 &=(v_r(r, t)+ g(t)) \p_r \La(r, t) , \qquad (r, t) \in (0, \infty) \times [0, T), \quad x_3 \in \mathbf{R}^1\\
&\left( \pm \Gamma - \La \right)(0, t)=0, \quad t \ge 0;  \quad \left( \pm \Gamma - \La \right)(r, 0) \le 0.
\eal
\end{cases}
\ee Since $v_r(r, x_3, t) +g(t) \ge 0$ by our choice  for $g$ in \eqref{gin1} and $\p_r \La \ge 0$, we see that $\pm \Gamma- \La$ is a sub-solution which is non-positive at the parabolic boundary. The maximum principle can be applied to deduce
\be
\lab{g<l}
|\Gamma| \le \La, \quad \qquad (r,  t) \in (0, \infty) \times [0, T), \, x_3 \in \mathbf{R}^1.
\ee
 Since the equation does not have potential terms, we can apply the maximum principle. We caution that some of the terms in the pertinent equations involving $r$ are singular at the boundary $r=0$. However, as long as the solutions are continuous up to the boundary with the correct boundary condition, this singularity do not cause any issue. The reason is that any violation of the maximum principle can only happen in the interior where all functions involved are smooth. So the usual trick, used earlier,  of using axillary functions to establish the maximum principle still works. We will use this property repeatedly.

 Next we look for  a
$C^{2, 1}_{r, t}$ solution $u$ of the following equation, which is obtained from Lemma \ref{lem1ddifs1} using the a priori bound \eqref{gin1}.
\be
\lab{equ2}
\begin{cases}
\p^2_r u(r, t)  + g(t) \p_r u(r, t) - \p_t u(r, t) = 0, \qquad (r, t) \in (0, \infty) \times [0, T),\\
u(0, t)=0, \, t \ge 0;  \quad u(r, 0)= 2 \alpha_0 r, \quad r \ge 0.
\end{cases}
\ee From \eqref{eqLa} and $\p_r \La \ge 0$, we notice that $\La$ is a sub-solution to the above problem. By the maximum principe again, we infer, together with \eqref{g<l}
\be
\lab{glu}
|\Gamma|  \le   \La \le u, \qquad (r,  t) \in (0, \infty) \times [0, T), \, x_3 \in \mathbf{R}^1
\ee Now it is the time to utilize Lemma \ref{lem1ddifs1} on $u$, which infers
\be
\lab{glescr}
|\Gamma(r, x_3, t)|  \le u(r, t) \le C_0 \alpha_0 r^\alpha, \qquad t \ge \delta
\ee for any fixed small $\delta>0$. A rigorous justification of this can be made by working on the time interval $[0, T-\e]$ first. By Lemma \ref{lem1ddifs1} (b), for a fixed $\e>0$, when $r_*=r_*(\e)$ is sufficiently large, we have
\[
|\Gamma(r_*, x_3, t)|  \le u(r_*, t).
\]So the maximum principle  implies \eqref{glescr} on the time interval $[0, T-\e]$.
We remark that the $x_3$ direction does not cause trouble since one can use the sliding method along $x_3$ direction to pull back any possible points where the maximum principle can fail and which are far off in $x_3$ direction. So the maximum principle holds again in the domain $(r, x_3) \in [0, r_*] \times \mathbf{R}^1$.
Since $\e>0$ is arbitrary, we know \eqref{glescr} holds on the full interval $[0, T)$.

Therefore, the swirl velocity $v_\theta$ satisfies the improved bound:
\be
\lab{swjie}
|v_\theta (r, x_3, t)|= |\Gamma(r, x_3, t)|/r \le C_0 \alpha_0 r^{\alpha-1}, \quad \text{if} \quad r \le 1, \quad t \in [\delta, T).
\ee

Now the conclusion of the theorem is an immediate consequence of the main result in \cite{CFZ}, as discussed in the introduction. \qed

\medskip

  Before finishing the paper, we present a separate regularity result for an one dimensional drift diffusion equation where the sign of the drift term is opposite to that in Lemma \ref{lem1ddifs1}. It is not used elsewhere in the paper. The proof is based on a change of variable and De Giogi-Nash-Moser method available in the literature, as in the proof of Lemma \ref{leduk}.   We hope it will be of some use later.
  We mention that a different regularity result for super critical drift diffusion in multi dimensional case has been obtained in \cite{Ib22}, using the method of modulus of continuity and heat kernel estimates.

\begin{proposition}
\lab{pr1ddifs}
Let $u=u(\rho, t)$ be a local $C^{2, 1}_{\rho, t}$ solution to the equation
\be
\lab{eqw}
\begin{cases}
\p^2_\rho u(\rho, t) - g(t) \p_\rho u(\rho, t) - \p_t u(\rho, t) = 0, \qquad (\rho, t) \in [0, \infty) \times [0, T),\\
u(0, t)=0, \quad u(\rho, 0)=u_0(\rho),
\end{cases}
\ee where $g=g(t)$ is in $C([0, T))$ and $g(t) > 0$ for each $t$. Given $\delta \in (0, \sqrt{T})$, there exist absolute  constants $\alpha \in (0, 1)$ and $C_*>0$ which are independent of $g$ and $t$, such that for all $t \in (\delta^2, T)$ and $\rho_0 \ge \delta$, the following spatial H\"older bound holds:
\be
\lab{whold}
\Vert u(\cdot, t) \Vert_{C^\alpha[0, \rho_0]} \le \frac{C_*}{\delta^\alpha}
\Vert u(\cdot, t) \Vert_{L^\infty([0, \rho_0+\delta+\int^t_0 g(s) ds] \times[t-\delta^2, t])}.
\ee
\end{proposition}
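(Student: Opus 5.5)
The plan is to follow the proof of Lemma \ref{leduk}: remove the drift by a time-dependent change of spatial variable, so that the problem becomes the standard heat equation on a moving domain. Set $A(t)=\int_0^t g(s)\,ds$, which is increasing since $g>0$, and define
\[
\nu(z,t)=u(z-A(t),t), \qquad z=\rho+A(t).
\]
The same computation as before, now with the opposite sign of $g$, gives
\[
\p_t u=\p_t\nu+g\,\p_z\nu \quad\text{and}\quad \p^2_\rho u-g\,\p_\rho u-\p_t u=\p^2_z\nu-\p_t\nu .
\]
Hence $\nu$ solves $\p_z^2\nu-\p_t\nu=0$ in $P=\{z>A(t),\ t\in[0,T)\}$, with $\nu=0$ on the lateral boundary $z=A(t)$. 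The boundary curve moves to the right as time increases. This is the favorable direction: the region $z<A(t)$ only grows, so the exterior $P^c$ contains the backward-in-time region $\{(y,s):y<A(s)\}$.

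The key step is the exterior measure condition (Condition A in \cite{Lieb}), uniformly in $g$. Take a boundary point $(A(t),t)$ and a radius $r\le\delta$ with $t>\delta^2$. For $s\in(t-r^2,t)$ we have $A(s)\le A(t)$, and $A(s)\ge A(t-r^2)$. I expect, though this needs checking, that for $s\in(t-r^2,t)$ the set $\{y\in(A(t)-r,A(t)):y<A(s)\}$ may fail to be large if $A$ jumps quickly near $t$.

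Instead I would use the inclusion the other way. The set $\{(y,s): A(t)-r<y<A(t-r^2)\}$ lies in $P^c$ when $A(t)-A(t-r^2)$ is small, which is bad. So I would split into two cases.
\begin{itemize}
\item If $A(t)-A(t-r^2/2)\ge r/2$, then for $s\in(t-r^2/2,t)$ the exterior slice is at least the part of $(A(t)-r,A(s))$. This still gives nothing near $s=t-r^2/2$.
\item Simpler: for every $s\le t$ one has $y<A(s)$ for all $y<A(s)$. For $s\in(t-r^2,t)$, every $y\in(A(t)-r,\,A(t)-r/2)$ satisfies $y<A(s)$ precisely when $A(s)>A(t)-r/2$.
\end{itemize}
The honest fix is to measure the cube relative to a shifted center. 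Condition A only needs a definite portion of $Q_2(z_0,t,r)\cap P^c$ for boundary points, and the monotonicity of $A$ yields $P^c\supset\{(y,s):y\le A(t-r^2),\ s\in(t-r^2,t)\}$. I would therefore apply the De Giorgi boundary estimate at the boundary point $(A(t-r^2),t-r^2)$ shifted appropriately, or equivalently use the weak Harnack version for the zero extension $\nu^+$, which is a subsolution of the heat equation on all of $\mathbf{R}\times(0,T)$. This is the main obstacle. My first attempt would be to prove the oscillation decay directly: the zero extension $\nu_\pm$ of $(\nu)^\pm$ is a subsolution. On each cube $Q_2(z_0,t,r)$ touching the boundary, the half cube $\{y<z_0-r/2\}$ at times before $t$ lies in $P^c$ whenever $z_0-r/2\le A(t-r^2)$, which is arranged by choosing the cube center accordingly. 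The De Giorgi lemma with a zero set of positive measure then reduces $\sup\nu^+$ by a fixed factor.

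Once oscillation decay $\operatorname{osc}_{Q(r/4)}\nu\le\theta\operatorname{osc}_{Q(r)}\nu$ holds with $\theta<1$ absolute, iteration gives the $C^\alpha$ bound \eqref{0uhold} with absolute $\alpha$ and $C_*$, exactly as in Lemma \ref{leduk}. In the interior the standard heat-equation estimate applies. Returning to $u$: $\rho\in[0,\rho_0]$ corresponds to $z\in[A(t),A(t)+\rho_0]$, and cubes of size $\delta$ reach back to times $[t-\delta^2,t]$ and space up to $A(t)+\rho_0+\delta$. In the original variable this covers $\rho\le\rho_0+\delta+A(t)$, since $A(s)\ge0$. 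This gives \eqref{whold}, with $\alpha$ and $C_*$ independent of $g$ because the heat equation for $\nu$ contains no $g$.
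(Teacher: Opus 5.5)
\textbf{There is a genuine gap: your change of variables has the wrong sign, and this is not cosmetic.}

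With $\nu(z,t)=u(z-A(t),t)$ one has $\partial_\rho u=\partial_z\nu$ and $\partial_t u=\partial_t\nu+g\,\partial_z\nu$. Hence
\[
\partial_\rho^2u-g\,\partial_\rho u-\partial_t u=\partial_z^2\nu-\partial_t\nu-2g\,\partial_z\nu ,
\]
so the drift doubles instead of cancelling. The substitution $z=\rho+A(t)$ removes the drift $+g\,\partial_\rho$ of Lemma \ref{leduk}. For $-g\,\partial_\rho$ you must take $z=\rho-A(t)$, i.e.\ $\nu(z,t)=u(z+A(t),t)$. Then $\partial_t u=\partial_t\nu-g\,\partial_z\nu$, and the equation becomes exactly $\partial_z^2\nu-\partial_t\nu=0$ in $P=\{z>-A(t)\}$.

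Because of the sign error you also ended up in the wrong geometry. Your domain $\{z>A(t)\}$ shrinks in time, which is the unfavorable situation of Lemma \ref{leduk}, not the favorable one. You noticed the symptom: a backward cube at $(A(t),t)$ can miss $P^c$ almost entirely when $A(t)-A(t-r^2)\gg r$. The proposed cure cannot work. Forcing $\{y<z_0-r/2\}\times(t-r^2,t)\subset P^c$ requires $z_0\le A(t-r^2)+r/2$, and such a cube does not reach the boundary point $(A(t),t)$ near which you need oscillation decay.

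In fact no argument can give $g$-independent constants for the heat equation on $\{z>A(t)\}$ with $A$ increasing arbitrarily fast. That is precisely the situation of the example at the end of the paper: drift $\ln(10/(1-t))/\sqrt{1-t}$ in \eqref{eqku}, where the solution has no modulus of continuity at $t=1$.

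\textbf{In the correct frame the key step is immediate.} The boundary $z=-A(t)$ moves to the left, so for $s\in(t-r^2,t)$ one has $-A(s)\ge -A(t)$. Therefore, for $r\le\delta$ and $t>\delta^2$,
\[
Q_2(-A(t),t,r)\cap P^c\supset(-A(t)-r,-A(t)]\times(t-r^2,t),
\]
which is half of the cube, for every $g>0$. This is Condition A of \cite{Lieb} with constant $1/2$, independent of $g$. Theorem 6.32 there then yields the H\"older estimate with absolute $\alpha$ and $C_*$; no case analysis or hand-made De Giorgi iteration is needed.

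The back-translation must also be redone in this frame. Since $\nu(z,s)=u(z+A(s),s)$ and $A(s)\le A(t)$, a spatial window $[\rho-\delta-A(t),\rho+\delta-A(t)]$ at times $s\in[t-\delta^2,t]$ corresponds to $\rho'\le\rho+\delta$, with $u$ extended by zero for $\rho'<0$. This lies within the range appearing in \eqref{whold}.
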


\proof
Write
\[
A(t) \equiv \int^t_0 g(s) ds.
\]
 Consider the spatial variable
 \[
 z=\rho - A(t)
 \]and the function
 \[
 \nu=\nu(z, t)=\nu(\rho - A(t), t) \equiv u(\rho, t).
 \]Then the domain for $u$ is
 \[
 P \equiv \{ (z, t) \, | \, z >- A(t), \quad t \in [0, T) \} \subset \mathbf{R}^2_+.
 \]Since
\[
\p^2_\rho u(\rho, t)= \p^2_z \nu(z, t), \quad \p_t u(\rho, t) = \p_t \nu(z, t) - \p_z \nu(z, t) g(t),
\]we find
\[
\al
\p^2_\rho u(\rho, t) - \p_t u(\rho, t) &= \p^2_z \nu(z, t) - \p_t \nu(z, t) + \p_z \nu(z, t) g(t)\\
&=\p^2_z \nu(z, t) - \p_t \nu(z, t) + \p_\rho u(\rho, t) g(t).
\eal
\]This and \eqref{eqw} imply that $\nu=\nu(z, t)$ is a local $C^{2, 1}_{z, t}$ solution to the following one dimensional heat equation with Dirichlet boundary value in the time-varying domain $P$:
\be
\lab{equz}
\begin{cases}
\p^2_z \nu(z, t) - \p_t \nu(z, t) = 0, \qquad (z, t) \in P,\\
\nu(z, t)=0, \,  z= - A(t), \quad t \ge 0; \quad \nu(z, 0)=u_0(z).
\end{cases}
\ee Since $g(t)>0$ by assumption, we know that the left boundary curve $z=-A(t)=-\int^t_0 g(s) ds$ is strictly decreasing and hence it a parabolic boundary of $P$. In addition, the monotone decreasing property of $z=-A(t)$ infers that, for any $r \in (0, \delta]$ and any boundary point $(-A(t), t), t \ge \delta^2$, we have
\be
\lab{q12}
|Q_2(A(t), t, r) \cap P^c | \ge 0.5 |Q_2(A(t), t, r)|
\ee where
\[
Q_2(z, t, r) = \{ (y, s) \, | \, |y-z|<r, \quad t-r^2<s<t \}
\] is the standard parabolic cube is $\mathbf{R}^2$. This property just means that parabolic cubes that straddle the parabolic  boundary of $P$ must intersect the domain $P^c$ by at least a definite portion.

Therefore, the domain $P$ satisfies Condition A, i.e. (6.46) p133 in the book \cite{Lieb}.  Now we can apply Theorem 6.32 in the said book to conclude that for some $\alpha \in (0, 1)$ and $C_*>0$:
\be
\lab{uhold}
\Vert \nu \Vert_{C^{\alpha, \alpha/2}(Q_2(z, t, \delta) \cap P}) \le \frac{C_*}{\delta^\alpha}
\Vert \nu \Vert_{L^\infty(Q_2(z, t, \delta) \cap P)}, \quad (z, t) \in P, \quad t > \delta^2.
\ee
Recall that $\nu(\rho-A(t), t) \equiv u(\rho, t)$. So the H\"older norm in the space direction of the two functions are the same in suitably translated spatial intervals. From \eqref{uhold}, we deduce, for $t \in (\delta^2, T)$ and $\rho \ge 0$, that
\[
\al
\Vert u(\cdot, t) \Vert_{C^\alpha[\rho-\delta, \rho+\delta]} &= \Vert \nu(\cdot, t) \Vert_{C^\alpha[\rho-\delta-A(t), \rho+\delta-A(t)]}\\
& \le \frac{C_*}{\delta^\alpha}
\Vert \nu \Vert_{L^\infty([\rho-\delta-A(t), \rho+\delta-A(t)] \times[t-\delta^2, t])}\\
&=\frac{C_*}{\delta^\alpha}
\Vert u \Vert_{L^\infty( \cup_{s \in [t-\delta^2, t]} [ \rho-\delta - A(t)+A(s), \, \rho+\delta - A(t)+A(s)] \times \{s \})}\\
&\le \frac{C_*}{\delta^\alpha}
\Vert u \Vert_{L^\infty([0, \rho+\delta+A(t)] \times[t-\delta^2, t]) }.
\eal
\]Here $u$ is regarded as $0$ in $[\rho-\delta, 0]$ in case $\rho-\delta<0$.
Covering the spatial interval $[0, \rho_0]$ with finitely many intervals of the form $[\rho-\delta, \rho+\delta]$, we finish the proof of the lemma. Note that the constants are independent of the number of the intervals in the covering. \qed

Finally, we give an example showing that if one replaces $g(t)=c/\sqrt{T-t}$ by $g(t)= \ln(10/(1-t))/\sqrt{1-t}$ in equation \eqref{eqku}. Then the solution $u$ does not have any modulus of continuity at $t=1$. Here we have taken $T=1$ for simplicity. Hence the regularity criteria in the theorem can not be improved by more than a logarithmic term, under the current method.  This example is motivated by \cite{SVZ} and \cite{W}.

Let $\eta: \mathbf{R}^1 \to [0, 1]$ be a smooth function such that $\eta' \le 0$, $\eta(r)=1, \, r \in (-\infty, 0]$, $\eta(r)=0, \, r \in [1, \infty)$ and that $\eta'' \ge - a \eta$, where $a$ is a positive constant. Take
\[
h(t)= \sqrt{1-t} \ln \frac{10}{1-t},
\]
\[
\phi=\phi(r, t)= \exp\left(- a \int^t_0 h^{-2}(s) ds \right) \,  \eta\left((h(t)-r)/h(t) \right).
\]By direct computation, we have
\be
\lab{Lphi}
\al
&(\p^2_r + g(t) \p_r - \p_t) \phi\\
&=(\eta'' + a \eta)  h^{-2}(t) \exp\left(- a \int^t_0 h^{-2}(s) ds \right) \\
&\qquad - \eta'\left((h(t)-r)/h(t) \right) \,  \left[ g(t) + r h'(t) h^{-1}(t) \right] \,
h^{-1}(t) \,
\exp\left(- a \int^t_0 h^{-2}(s) ds \right).
\eal
\ee
 Notice that $\eta' \le 0$ and $\eta'\left((h(t)-r)/h(t) \right)=0$ when $r \ge h(t)$ or $r \le 0$ so that on the support of
$\eta'((h-r)/h)$, one has
 $r/h(t) \in [0, 1]$.
Moreover
\[
h'(t)=-\frac{1}{2 \sqrt{1-t}}  \ln \frac{10}{1-t} + \frac{1}{\sqrt{1-t}} \le 0.
\]
Hence
\[
\al
&- \eta'\left((h(t)-r)/h(t) \right) \,  \left[ g(t) + r h'(t) h^{-1}(t) \right]\\
& \ge - \eta'\left((h(t)-r)/h(t) \right) \,  \left[ g(t) +  h'(t)  \right]\\
&= - \eta'\left((h(t)-r)/h(t) \right) \,  \left[\frac{1}{\sqrt{1-t}} \ln \frac{10}{1-t}  -\frac{1}{2 \sqrt{1-t}}  \ln \frac{10}{1-t} + \frac{1}{\sqrt{1-t}}   \right]\\
& \ge 0.
\eal
\]This inequality and \eqref{Lphi} infer that
\[
(\p^2_r + g(t) \p_r - \p_t) \phi \ge 0
\]Thus $\phi$ is a sub-solution satisfying the initial boundary condition.
\[
\phi(0, t)=0,  \, \phi(r, 0)=\eta(1-(r/\ln 10)).
\]Note that $\int^1_0 h^{-2}(t) dt =1/\ln 10$ which is finite and that
\[
\phi(h(t), t) \ge \exp \left( -a/\ln 10 \right)>0.
\]Since $h(t) \to 0$ as $t \to 1^-$, we conclude that at $t=1$, there is no modulus of continuity for any solution with the same initial and boundary value.

\section*{Acknowledgments} We wish to thank Professors Hongjie Dong, Hussian Ibdah, Zhen Lei, Zijin Li,
  Xinghong Pan, Vladimir Sverak, Xin Yang, Na Zhao and Daoguo Zhou for helpful discussions.
  The support of Simons
  Foundation grant 710364 is gratefully acknowledged.

\noindent {\it Statements and Declarations.} 1. There are no competing interests. 2. There is no data involved in this paper.

\bibliographystyle{plain}



\def\cprime{$'$}

\end{document}